\newtheorem{thm}{Theorem}
\newtheorem{assum}{Assumption}
\begin{document}

\title[A note on algebraic Riccati equations]{A note on algebraic Riccati equations
associated with reducible singular $M$-matrices}


\author{Di Lu}
\address{Department of Mathematics and Statistics, 
University of Regina, Regina, 
SK S4S 0A2, Canada}
\email{ludix203@uregina.ca}

\author{Chun-Hua Guo}
\address{Department of Mathematics and Statistics, 
University of Regina, Regina,
SK S4S 0A2, Canada} 
\email{chun-hua.guo@uregina.ca}

\thanks{This work was supported in part by a grant from
the Natural Sciences and Engineering Research Council of Canada.}

\subjclass{Primary 15A24; Secondary 65F30}

\keywords{Algebraic Riccati equation; Reducible singular $M$-matrix;  Minimal nonnegative solution.}

\begin{abstract}
We prove a conjecture about the minimal nonnegative solutions of algebraic Riccati equations associated with 
reducible singular $M$-matrices. The result enhances our understanding of the behaviour of doubling algorithms for finding the minimal  nonnegative solutions. 
\end{abstract}

\maketitle

\section{Introduction} 

For the algebraic Riccati equation
\begin{equation}\label{NARE}
XCX-XD-AX+B=0,
\end{equation}
where $A, B, C, D$ are real matrices of sizes $m\times m, m\times n,
n\times m, n\times n$, respectively, 
a systematic study was done in \cite{guo01} when 
\begin{align}\label{Mm}
K=\left [ \begin{array}{cc} D & -C\\ -B & A \end{array} \right ]
\end{align}
is a nonsingular $M$-matrix or an irreducible singular $M$-matrix. 
The study was recently extended in \cite{guo13} to reducible singular $M$-matrices under suitable assumptions. 

A real square matrix $A$ is called a
$Z$-matrix if all its off-diagonal elements are nonpositive, so any $Z$-matrix $A$
can be written as $sI-B$ with $B\ge 0$. A $Z$-matrix $A$ is called an $M$-matrix if
$s\ge\rho(B)$, where $\rho(\cdot)$ is the spectral radius; it is a singular $M$-matrix if
$s=\rho(B)$ and a nonsingular $M$-matrix if $s>\rho(B)$.

Some regularity assumption is needed to guarantee the existence of a solution of the equation \eqref{NARE} 
associated with the $M$-matrix $K$. 
An $M$-matrix $A$ is said to be regular if $Av\ge 0$ for some $v>0$. 

The following result  is proved in \cite{guo13}. 

\begin{thm}\label{thmn}
Suppose the matrix $K$ in \eqref{Mm} is  a regular $M$-matrix.  
Then $(\ref{NARE})$ has a minimal nonnegative solution $\Phi$ and 
$D-C\Phi$ is a regular $M$-matrix, and the dual equation 
\begin{equation}\label{nare2}
YBY-YA-DY+C=0,
\end{equation}
has a minimal nonnegative solution $\Psi$ and 
$A-B\Psi$ is a regular $M$-matrix. Moreover, 
$I_m-\Phi \Psi$ and $I_n-\Psi \Phi$ are both regular $M$-matrices. 
\end{thm}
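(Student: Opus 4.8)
The plan is to treat the (possibly reducible) singular matrix $K$ as a limit of nonsingular $M$-matrices, solve each approximating problem with the results of \cite{guo01}, and pass to the limit. For $\epsilon>0$ put $K_\epsilon=K+\epsilon I$; since $Kv\ge 0$ for some $v>0$ we have $K_\epsilon v\ge\epsilon v>0$, so $K_\epsilon$ is a nonsingular $M$-matrix. Its associated equation is \eqref{NARE} with $D,A$ replaced by $D+\epsilon I,A+\epsilon I$. By \cite{guo01} this shifted equation has a minimal nonnegative solution $\Phi_\epsilon$ with $R_\epsilon:=D+\epsilon I-C\Phi_\epsilon$ a nonsingular $M$-matrix, its shifted dual (the corresponding version of \eqref{nare2}) has a minimal nonnegative solution $\Psi_\epsilon$ with $A+\epsilon I-B\Psi_\epsilon$ a nonsingular $M$-matrix, and $I_m-\Phi_\epsilon\Psi_\epsilon$, $I_n-\Psi_\epsilon\Phi_\epsilon$ are nonsingular $M$-matrices. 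I will also use the companion fact, contained in the spectral analysis of \cite{guo01}, that $A+\epsilon I-\Phi_\epsilon C$ is a nonsingular $M$-matrix (it is the other diagonal block when $\mathrm{diag}(I_n,-I_m)K_\epsilon$ is block-triangularized using $\bigl[\begin{smallmatrix} I&0\\ \Phi_\epsilon&I\end{smallmatrix}\bigr]$).

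The crux is a uniform a priori bound. Partition $v$ conformally with the blocks of $K$ as $v_1\in\mathbb{R}^n$, $v_2\in\mathbb{R}^m$; then $v_1,v_2>0$ and $Kv\ge0$ reads $Dv_1\ge Cv_2$, $Av_2\ge Bv_1$. I claim $\Phi_\epsilon v_1\le v_2$ uniformly in $\epsilon$. Writing $u=v_2-\Phi_\epsilon v_1$ and using $B-(A+\epsilon I)\Phi_\epsilon=\Phi_\epsilon R_\epsilon$ (the bottom block of the invariant-subspace form of the shifted equation), a direct manipulation gives
\[
(A+\epsilon I-\Phi_\epsilon C)\,u=\big((A+\epsilon I)v_2-Bv_1\big)+\Phi_\epsilon\big((D+\epsilon I)v_1-Cv_2\big)\ge\epsilon v_2>0,
\]
because $Av_2\ge Bv_1$, $Dv_1\ge Cv_2$ and $\Phi_\epsilon\ge0$. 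As $A+\epsilon I-\Phi_\epsilon C$ is a nonsingular $M$-matrix, its inverse is nonnegative, whence $u\ge0$. The dual equation is the one attached to $\bigl[\begin{smallmatrix}A&-B\\-C&D\end{smallmatrix}\bigr]$, the symmetric block-swap permutation of $K$ and hence again a regular $M$-matrix, so the same argument yields $\Psi_\epsilon v_2\le v_1$. Since $v_1,v_2>0$, these bounds control every entry of $\Phi_\epsilon$ and $\Psi_\epsilon$.

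Convergence and minimality come from a supersolution comparison. Evaluating the left side of the $\epsilon_2$-shifted equation at $\Phi_{\epsilon_1}$ (for $\epsilon_1<\epsilon_2$) yields $-2(\epsilon_2-\epsilon_1)\Phi_{\epsilon_1}\le0$, so $\Phi_{\epsilon_1}$ is a supersolution of the $\epsilon_2$-equation and the monotone iteration from $X_0=0$, whose updates use the nonnegative inverse of $X\mapsto(A+\epsilon_2 I)X+X(D+\epsilon_2 I)$, stays below it, giving $\Phi_{\epsilon_2}\le\Phi_{\epsilon_1}$. Thus $\Phi_\epsilon$ increases as $\epsilon\downarrow0$; bounded above, it converges to some $\Phi\ge0$ solving \eqref{NARE} by continuity. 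Any nonnegative solution $\widehat\Phi$ of \eqref{NARE} makes the left side of the shifted equation equal $-2\epsilon\widehat\Phi\le0$, so the same comparison gives $\Phi_\epsilon\le\widehat\Phi$ and hence $\Phi\le\widehat\Phi$: $\Phi$ is minimal. The dual solution $\Psi=\lim_{\epsilon\downarrow0}\Psi_\epsilon$ is obtained identically, and the bounds pass to the limit as $\Phi v_1\le v_2$, $\Psi v_2\le v_1$.

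It remains to read off the $M$-matrix properties. $D-C\Phi=\lim_\epsilon R_\epsilon$ and $A-B\Psi=\lim_\epsilon(A+\epsilon I-B\Psi_\epsilon)$, as well as $I_m-\Phi\Psi$ and $I_n-\Psi\Phi$, are $Z$-matrices that are limits of nonsingular $M$-matrices, hence $M$-matrices (the class being closed under limits within the $Z$-matrices). Regularity is then immediate from the limit bounds: $(D-C\Phi)v_1\ge Dv_1-Cv_2\ge0$ and $(A-B\Psi)v_2\ge Av_2-Bv_1\ge0$, while $\Phi\Psi v_2\le\Phi v_1\le v_2$ and $\Psi\Phi v_1\le\Psi v_2\le v_1$ give $(I_m-\Phi\Psi)v_2\ge0$ and $(I_n-\Psi\Phi)v_1\ge0$, all with strictly positive test vectors. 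I expect the uniform bound of the second paragraph to be the main obstacle: it is where reducibility actually bites, and it depends on recognizing $A+\epsilon I-\Phi_\epsilon C$ as a nonsingular $M$-matrix so that its inverse can be used sign-preservingly; the rest is monotone-convergence bookkeeping together with closedness of the $M$-matrices under limits.
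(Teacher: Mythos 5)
Your proof is correct, but note first that this note contains no proof of Theorem \ref{thmn} to compare against: the result is imported verbatim from \cite{guo13}, and your argument follows the same basic strategy as that reference, namely regularize $K$ to the nonsingular $M$-matrix $K+\epsilon I$, invoke the nonsingular theory of \cite{guo01} for each $\epsilon$, obtain uniform bounds from a positive vector $v$ with $Kv\ge 0$, and pass to a monotone limit as $\epsilon\downarrow 0$. Your individual steps check out: the identity $(A+\epsilon I-\Phi_\epsilon C)(v_2-\Phi_\epsilon v_1)=\bigl((A+\epsilon I)v_2-Bv_1\bigr)+\Phi_\epsilon\bigl((D+\epsilon I)v_1-Cv_2\bigr)$ follows from $B-(A+\epsilon I)\Phi_\epsilon=\Phi_\epsilon R_\epsilon$, the block-triangularization you describe does show that $A+\epsilon I-\Phi_\epsilon C$ is a $Z$-matrix whose spectrum lies in the open right half-plane (hence a nonsingular $M$-matrix with nonnegative inverse), and the supersolution comparison correctly yields both the monotonicity $\Phi_{\epsilon_2}\le\Phi_{\epsilon_1}$ for $\epsilon_1<\epsilon_2$ and the minimality of the limit $\Phi$.

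Two remarks that would streamline it. First, your final paragraph is partly redundant: a $Z$-matrix $M$ satisfying $Mw\ge 0$ for some $w>0$ is automatically an $M$-matrix (write $M=sI-N$ with $N\ge 0$; then $Nw\le sw$ and $w>0$ give $\rho(N)\le s$). So your limit bounds $(D-C\Phi)v_1\ge 0$, $(A-B\Psi)v_2\ge 0$, $(I_m-\Phi\Psi)v_2\ge 0$ and $(I_n-\Psi\Phi)v_1\ge 0$ deliver the $M$-matrix property and regularity simultaneously, and you can drop both the closedness-of-$M$-matrices-under-limits argument and the borrowed facts that $I_m-\Phi_\epsilon\Psi_\epsilon$ and $I_n-\Psi_\epsilon\Phi_\epsilon$ are nonsingular $M$-matrices. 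Second, dropping those facts also tidies the citations: whether \cite{guo01} states the $I_m-\Phi_\epsilon\Psi_\epsilon$ claim in exactly that form is debatable (it is more explicit in the doubling literature, e.g.\ \cite{glx06,gim07}), whereas everything you actually need --- the minimal solutions $\Phi_\epsilon$, $\Psi_\epsilon$, the nonsingular $M$-matrices $D+\epsilon I-C\Phi_\epsilon$, $A+\epsilon I-\Phi_\epsilon C$, $A+\epsilon I-B\Psi_\epsilon$, and the convergence of the monotone iteration to the minimal solution --- is squarely in \cite{guo01}.
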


Associated with the matrix $K$ in \eqref{Mm} is the matrix 
\begin{equation}\label{Hm}
H=\left [\begin{array}{cc}I_n & 0\\
0 & -I_m
\end{array} \right ] K=\left [ \begin{array}{cc}
D & -C\\
B & -A
\end{array} \right ]. 
\end{equation}

When the matrix $K$ in \eqref{Mm} is regular singular $M$-matrix, the following assumption has been  introduced in \cite{guo13}. 

\begin{assum}\label{a1}
The matrix $H$ in \eqref{Hm} has only one linearly independent eigenvector corresponding to the 
zero eigenvalue of multiplicity $r\ge 1$. 
\end{assum}

It is known \cite{guo01} that the assumption is satisfied with $r=2$ when $K$ is an irreducible singular $M$-matrix.

Under  Assumption \ref{a1}, there are nonnegative nonzero vectors 
$\left[\begin{array}{c}
u_1\\
u_2
\end{array}\right ]$ and $\left[\begin{array}{c}
v_1\\
v_2
\end{array}\right ]$, where $u_1, v_1\in {\mathbb R}^n$ and $u_2, v_2\in {\mathbb R}^m$, 
 such that 
$$
K\left[\begin{array}{c}
v_1\\
v_2
\end{array}\right ]=0, \quad [u_1^T\;u_2^T] K=0.  
$$
They are each unique up to a scalar multiple \cite{guo13}. 

The purpose of this note is to provide an affirmative answer to a conjecture in \cite{guo13}, regarding the matrices 
 $I_m-\Phi \Psi$ and $I_n-\Psi\Phi$.

\section{The result}

The following result was conjectured to be true in \cite{guo13}, and was proved under the restrictive assumption that 
at least one of $\Phi$ and $\Psi$ is positive. 

\begin{thm}\label{conj1}
Let $K$ be a regular singular $M$-matrix  with Assumption \ref{a1}. 
If $u_1^Tv_1\ne u_2^Tv_2$, then $I_m-\Phi \Psi$ and $I_n-\Psi\Phi$ are nonsingular $M$-matrices. 
\end{thm}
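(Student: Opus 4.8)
The plan is to reduce the statement to a single nonsingularity assertion and then exploit the symmetry between \eqref{NARE} and its dual \eqref{nare2}. By Theorem~\ref{thmn} the matrices $I_m-\Phi\Psi$ and $I_n-\Psi\Phi$ are already regular $M$-matrices, so each is nonsingular if and only if its spectral radius part is strictly controlled; writing $I-N$ with $N\ge 0$, singularity amounts to $\rho(N)=1$, and since $\Phi\Psi$ and $\Psi\Phi$ have the same nonzero spectrum it suffices to prove that $W=\begin{bmatrix} I_n & \Psi\\ \Phi & I_m\end{bmatrix}$ is nonsingular, because $\det W=\det(I_m-\Phi\Psi)=\det(I_n-\Psi\Phi)$. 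Setting $R=D-C\Phi$ and $S=A-B\Psi$ (regular $M$-matrices by Theorem~\ref{thmn}), the equations \eqref{NARE} and \eqref{nare2} translate into the invariant-subspace identities $H\begin{bmatrix} I_n\\ \Phi\end{bmatrix}=\begin{bmatrix} I_n\\ \Phi\end{bmatrix}R$ and $H\begin{bmatrix}\Psi\\ I_m\end{bmatrix}=-\begin{bmatrix}\Psi\\ I_m\end{bmatrix}S$. Thus the columns of $W$ span $H$-invariant subspaces $\mathcal V_+,\mathcal V_-$ of dimensions $n$ and $m$, on which $H$ acts as $R$ and $-S$ respectively, and $W$ is singular precisely when $\mathcal V_+\cap\mathcal V_-\neq\{0\}$.

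The first substantive step is to locate the intersection. A regular $M$-matrix has all eigenvalues in the closed right half-plane, and the only eigenvalue it can have on the imaginary axis is $0$; hence $H$ restricted to $\mathcal V_+$ has spectrum in the closed right half-plane and on $\mathcal V_-$ in the closed left half-plane. Consequently $H$ restricted to the invariant subspace $\mathcal V_+\cap\mathcal V_-$ has spectrum contained in $\{0\}$, so a nontrivial intersection must meet $\ker H$. Under Assumption~\ref{a1} the kernel of $H$ is one dimensional, spanned by $\begin{bmatrix} v_1\\ v_2\end{bmatrix}$, so $W$ singular forces $\begin{bmatrix} v_1\\ v_2\end{bmatrix}\in\mathcal V_+\cap\mathcal V_-$, that is, $v_2=\Phi v_1$ and $v_1=\Psi v_2$ (equivalently, both $R$ and $S$ are singular).

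The decisive device is the transpose symmetry. The equation associated with $K^T$ is again of the present type: $K^T$ is a regular singular $M$-matrix, its $H$-matrix is similar to $H^T$ so Assumption~\ref{a1} is inherited, its minimal solutions are $\Psi^T$ and $\Phi^T$, and the right null vector of $K^T$ is $\begin{bmatrix} u_1\\ u_2\end{bmatrix}$. The associated block matrix is exactly $W^T$. Applying the previous paragraph verbatim to this transposed problem, the singularity of $W^T$ (equivalent to that of $W$) forces $u_2=\Psi^Tu_1$ and $u_1=\Phi^Tu_2$. Combining the two conclusions: if $W$ were singular, then using $u_1=\Phi^Tu_2$ and $v_2=\Phi v_1$ we get $u_1^Tv_1=u_2^T\Phi v_1=u_2^Tv_2$, contradicting the hypothesis $u_1^Tv_1\neq u_2^Tv_2$. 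Hence $W$ is nonsingular, and with Theorem~\ref{thmn} this shows $I_m-\Phi\Psi$ and $I_n-\Psi\Phi$ are nonsingular $M$-matrices.

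The main obstacle is exactly the point the transpose argument is designed to overcome. A direct attack runs into circularity: when $W$ is singular one computes $u_1^Tv_1-u_2^Tv_2=(u_1^T-u_2^T\Phi)v_1$, a pairing of a left and a right null vector of the \emph{possibly reducible} singular $M$-matrix $R$, and every algebraic manipulation merely re-expresses this same scalar, so it cannot be pinned to zero from the primal data alone. The transpose symmetry breaks the loop by producing the missing ``left'' identity $u_1=\Phi^Tu_2$ as the structural conclusion for the dual problem. The step requiring the most care is therefore the verification that the transposed problem is a genuine instance of the same setup, so that the intersection argument applies without change; in particular one must confirm that $K^T$ is a regular singular $M$-matrix satisfying Assumption~\ref{a1} and that its minimal solutions and null vectors are precisely $\Psi^T,\Phi^T$ and $\begin{bmatrix} u_1\\ u_2\end{bmatrix}$.
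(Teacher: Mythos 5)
Your reduction to the nonsingularity of $W=\left[\begin{smallmatrix} I_n & \Psi\\ \Phi & I_m\end{smallmatrix}\right]$ and your primal-side intersection argument are sound, but the transpose endgame has a genuine gap: regularity of an $M$-matrix (the existence of $v>0$ with $Kv\ge 0$) is \emph{not} preserved under transposition, so your claim that ``$K^T$ is a regular singular $M$-matrix'' fails in general. Concretely, take $n=m=1$, $D=0$, $C=0$, $B=1$, $A=1$, so $K=\left[\begin{smallmatrix}0&0\\-1&1\end{smallmatrix}\right]$: this is a regular singular $M$-matrix ($K(1,1)^T=0$) satisfying Assumption~\ref{a1} with $r=1$, with null vectors $v=(1,1)^T$, $u=(1,0)^T$ and $u_1^Tv_1=1\ne 0=u_2^Tv_2$, yet $K^T$ is not regular, since the first entry of $K^Tv$ is $-v_2<0$ for every $v>0$. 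Hence you cannot invoke Theorem~\ref{thmn} for the transposed problem, and that theorem is precisely what your second paragraph needs: while minimality of $\Psi^T$ and $\Phi^T$ does transfer by pure transposition (transposition is an order isomorphism between the solution sets), the half-plane localization in your intersection argument for $W^T$ requires that the transposed closed-loop matrices $D^T-B^T\Psi^T=(D-\Psi B)^T$ and $A^T-C^T\Phi^T=(A-\Phi C)^T$ be $M$-matrices, and Theorem~\ref{thmn} asserts this only for $D-C\Phi$ and $A-B\Psi$. (Since for a $Z$-matrix the eigenvalue of minimal real part is real, your spectral claim is \emph{equivalent} to the $M$-matrix claim; it may well be true under the paper's hypotheses, but it requires a separate proof and does not follow by the route you propose.)

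The repair is short, because your primal half already proves more than you use. If $W$ is singular, you showed $\left[\begin{smallmatrix}v_1\\ v_2\end{smallmatrix}\right]\in\mathcal V_+\cap\mathcal V_-$, i.e., $v_2=\Phi v_1$ and $v_1=\Psi v_2$; then $v_1\ne 0$ and $v_2\ne 0$ (each vanishing forces the other to vanish), and injectivity of $\left[\begin{smallmatrix}I\\ \Phi\end{smallmatrix}\right]$ and $\left[\begin{smallmatrix}\Psi\\ I\end{smallmatrix}\right]$ gives $Rv_1=0$ and $Sv_2=0$, so \emph{both} $R=D-C\Phi$ and $S=A-B\Psi$ are singular. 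But it is known from \cite{guo13} --- and this is exactly the fact the paper's proof cites --- that when $u_1^Tv_1\ne u_2^Tv_2$ precisely one of $R$ and $S$ is singular; this contradiction finishes the proof with no transposed problem at all. The paper argues in a formally different but essentially equivalent way: from the cited fact it concludes $R$ and $-S$ have no common eigenvalue, observes via \eqref{wh} that $\ker W$ is invariant under ${\rm diag}(R,-S)$, picks an eigenvector $w=\left[\begin{smallmatrix}w_1\\ w_2\end{smallmatrix}\right]$ of the restriction, concludes one of $w_1,w_2$ vanishes, and then both vanish from $Ww=0$. Your invariant-subspace formulation is the same mechanism as the paper's kernel argument; the only genuinely new ingredient in your write-up is the transpose symmetry, which you introduced to avoid citing the one-of-$R,S$-singular result, and which is exactly where the argument breaks.
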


\begin{proof}
By Theorem \ref{thmn}, $I_m-\Phi \Psi$ and $I_n-\Psi \Phi$ are both $M$-matrices. So we just need to show they are nonsingular when $u_1^Tv_1\ne u_2^Tv_2$. Since $I_n-\Psi \Phi$ is nonsingular if and only if 
$I_m-\Phi \Psi$ is nonsingular, we only need to show $I_m-\Phi \Psi$ is nonsingular. 
In view of 
$$
\left [\begin{array}{cc}
I_n & 0\\
-\Phi & I_m
\end{array} \right ] \left [\begin{array}{cc}
I_n & \Psi\\
\Phi & I_m
\end{array} \right ]=\left [\begin{array}{cc}
I_n & \Psi\\
0 & I_m-\Phi\Psi
\end{array} \right ],
$$
we need to show that the matrix  
$$
\left [\begin{array}{cc}
I_n & \Psi\\
\Phi & I_m
\end{array} \right ]
$$
is nonsingular. Since $\Phi$ and $\Psi$ are solutions of \eqref{NARE} and \eqref{nare2}, respectively, it is easily verified that 
\cite{guo13}
\begin{equation}\label{wh}
\left [\begin{array}{cc}
D & -C\\
B & -A
\end{array} \right ] \left [\begin{array}{cc}
I_n & \Psi\\
\Phi & I_m
\end{array} \right ]=\left [\begin{array}{cc}
I_n & \Psi\\
\Phi & I_m
\end{array} \right ]\left [\begin{array}{cc}
R & 0\\
0 & -S
\end{array} \right ],
\end{equation}
where $R=D-C\Phi$ and $S=A-B\Psi$ are $M$-matrices. Therefore, the eigenvalues of $R$ and $S$ are all in the closed right half plane, with $0$ being the only possible eigenvalue on the imaginary axis. 
When  $u_1^Tv_1\ne u_2^Tv_2$, we know from \cite{guo13} that one of the matrices $R$ and $S$ is singular and the other is nonsingular. 
It follows that the matrices $R$ and $-S$ have no eigenvalues in common. 

Let 
$$
W={\rm Ker} \left [\begin{array}{cc}
I_n & \Psi\\
\Phi & I_m
\end{array} \right ]. 
$$
We need to show $W=\{0\}$. 

For any $x\in W$, 
post-multiplying \eqref{wh} by $x$ shows that $Tx\in W$, where 
$T$ is the linear transformation from $\mathbb C^{m+n}$ to $\mathbb C^{m+n}$, defined by 
$$
T\left [\begin{array}{c}
y_1\\
y_2
\end{array} \right ]=\left [\begin{array}{cc}
R & 0\\
0 & -S
\end{array} \right ]\left [\begin{array}{c}
y_1\\
y_2
\end{array} \right ],
$$
where $y_1\in \mathbb C^n$ and $y_2\in \mathbb C^m$. 
Thus $W$ is an invariant subspace of the linear transformation $T$. 
Suppose $W\ne \{0\}$. Then we have $0\ne w\in W$ such that $Tw=\lambda w$ for some $\lambda \in \mathbb C$. 
Write $w=\left [\begin{array}{c}
w_1\\
w_2
\end{array} \right ],
$
where $w_1\in \mathbb C^n$ and $w_2\in \mathbb C^m$. 
We then have $Rw_1=\lambda w_1$ and $(-S)w_2=\lambda w_2$. 
Since $R$ and $-S$ have no eigenvalues in common, one of $w_1$ and $w_2$ must be a zero vector. 
It then follows from 
$$
\left [\begin{array}{cc}
I_n & \Psi\\
\Phi & I_m
\end{array} \right ]\left [\begin{array}{c}
w_1\\
w_2
\end{array} \right ]=0
$$
that $w_1$ and $w_2$ are both zero vectors. The contradiction shows that $W= \{0\}$. 
\end{proof} 

It has been explained in \cite{guo13} that when $K$ is a regular singular $M$-matrix  with Assumption \ref{a1} and 
$u_1^Tv_1\ne u_2^Tv_2$, doubling algorithms (such as SDA in \cite{glx06} and ADDA in \cite{wwl12}) 
can be used to find the minimal nonnegative solutions $\Phi$ and $\Psi$ simultaneously. 
But before Theorem \ref{conj1} is proved in this note, there were a few subtle issues associated with the doubling algorithms. 
 We now have the following modification of \cite[Theorem 15]{guo13} about the ADDA, which uses two parameters $\alpha$ and $\beta$. The ADDA is reduced to the SDA when $\alpha=\beta$. 

\begin{thm}\label{thm99}
Let $K$ be a regular singular $M$-matrix with Assumption \ref{a1} and 
$u_1^Tv_1\ne u_2^Tv_2$. Assume that 
$\alpha\ge  \max a_{ii}>0$ and $\beta\ge \max d_{ii}>0$. Then the ADDA is well defined with 
$I-G_kH_k$ and $I-H_kG_k$ being nonsingular $M$-matrices for each $k\ge 0$. 
Moreover, $E_0\le 0, F_0\le 0$,  $E_k\ge 0, F_k\ge 0$, 
$0\le H_{k-1}\le H_k\le \Phi$, $0\le G_{k-1}\le G_k\le \Psi$ for all $k\ge 1$, 
and 
$$
\limsup_{k\to \infty}\sqrt[2^k]{\|H_k-\Phi\|}\le r(\alpha, \beta), 
\quad 
 \limsup_{k\to \infty}\sqrt[2^k]{\|G_k-\Psi\|}\le r(\alpha, \beta),  
$$
where 
$r(\alpha, \beta)=
\rho\left ((R+\alpha I)^{-1}(R-\beta I)\right )\cdot \rho\left ((S+\beta I)^{-1}(S-\alpha I)\right )<1$ 
with $R=D-C\Phi, S=A-B\Psi$. 
\end{thm}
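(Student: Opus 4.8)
The plan is to follow the ADDA convergence analysis carried out for \cite[Theorem 15]{guo13}, with Theorem \ref{conj1} now supplying the single ingredient that previously forced the restrictive positivity assumption, so that the conclusion holds under the weaker hypothesis here. First I would fix the ADDA initialization associated with the parameters $\alpha,\beta$ (as in \cite{wwl12,guo13}) and the four matrix sequences $E_k,F_k,G_k,H_k$ produced by the standard doubling recurrences
\[
E_{k+1}=E_k(I-G_kH_k)^{-1}E_k,\qquad F_{k+1}=F_k(I-H_kG_k)^{-1}F_k,
\]
\[
G_{k+1}=G_k+E_k(I-G_kH_k)^{-1}G_kF_k,\qquad H_{k+1}=H_k+F_k(I-H_kG_k)^{-1}H_kE_k,
\]
recalling that $H_k\to\Phi$ and $G_k\to\Psi$. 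The scheme is well defined exactly when $I-G_kH_k$ (of size $n$) and $I-H_kG_k$ (of size $m$) are invertible at every step, so the whole theorem hinges on controlling these two matrices uniformly in $k$.

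The core of the proof is a single induction on $k$ establishing, simultaneously, the sign relations $E_0\le 0$, $F_0\le 0$, $E_k\ge 0$, $F_k\ge 0$ for $k\ge 1$, the monotonicity $0\le H_{k-1}\le H_k\le\Phi$ and $0\le G_{k-1}\le G_k\le\Psi$, and the claim that $I-G_kH_k$ and $I-H_kG_k$ are nonsingular $M$-matrices. The base case and the correct signs of $E_0,F_0$ use the hypotheses $\alpha\ge\max a_{ii}>0$ and $\beta\ge\max d_{ii}>0$, which make the initial Cayley-type transforms well defined with the desired signs. For the inductive step, once $0\le H_k\le\Phi$ and $0\le G_k\le\Psi$ are available, I obtain $0\le G_kH_k\le\Psi\Phi$ and $0\le H_kG_k\le\Phi\Psi$ entrywise; since $I_n-\Psi\Phi$ and $I_m-\Phi\Psi$ are nonsingular $M$-matrices by Theorem \ref{conj1}, the comparison principle for $M$-matrices (a $Z$-matrix dominating a nonsingular $M$-matrix entrywise is again a nonsingular $M$-matrix) shows that $I-G_kH_k$ and $I-H_kG_k$ are nonsingular $M$-matrices, hence have nonnegative inverses. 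Substituting these nonnegative inverses into the recurrences then propagates the sign and monotonicity relations to step $k+1$. This is precisely the place where \cite{guo13} had to assume one of $\Phi,\Psi$ positive, and Theorem \ref{conj1} is exactly what removes that restriction.

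For the convergence rate I would use the structured doubling identities, which after $k$ steps express the errors $\Phi-H_k$ and $\Psi-G_k$ as products of uniformly bounded matrix factors sandwiching the $2^k$-th powers of the Cayley transforms
\[
\widehat R=(R+\alpha I)^{-1}(R-\beta I),\qquad \widehat S=(S+\beta I)^{-1}(S-\alpha I);
\]
the uniform boundedness of the outer factors rests on the monotone convergence just proved and on the nonsingularity of $I-\Phi\Psi$. Taking $2^k$-th roots and passing to the limit superior then gives the stated bound with $r(\alpha,\beta)=\rho(\widehat R)\,\rho(\widehat S)$. To see $r(\alpha,\beta)<1$, I would use that the spectra of $R$ and $S$ lie in the closed right half plane (from \eqref{wh}), that the parameter conditions bound their real parts so the Cayley factors of the nonzero eigenvalues behave correctly, and, decisively, the dichotomy from \cite{guo13}: under $u_1^Tv_1\ne u_2^Tv_2$ exactly one of $R,S$ is singular, so they are not both singular. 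A shared zero eigenvalue would contribute reciprocal factors $\beta/\alpha$ and $\alpha/\beta$ to the two spectral radii, giving $r=1$; ruling this out is exactly what the one-singular dichotomy accomplishes.

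The step I expect to be the genuine obstacle is securing the nonsingular $M$-matrix property of $I-G_kH_k$ and $I-H_kG_k$ for every $k$ — historically the subtle point that blocked an unconditional proof — and the whole strategy is designed so that this reduces, via the entrywise comparison above, to the nonsingularity of $I_m-\Phi\Psi$ and $I_n-\Psi\Phi$ furnished by Theorem \ref{conj1}. The remaining delicate verification is the strict inequality $r(\alpha,\beta)<1$, where individual Cayley factors can exceed one in modulus and only the product, controlled through the not-both-singular structure, is guaranteed to stay below one.
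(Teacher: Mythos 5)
Your proposal is correct and follows essentially the same route as the paper, which proves Theorem \ref{thm99} by taking the analysis of \cite[Theorem 15]{guo13} and using the comparison approach of \cite{gim07} --- exactly your inductive step where $0\le G_kH_k\le \Psi\Phi$ and $0\le H_kG_k\le \Phi\Psi$ combine with the nonsingularity of $I_n-\Psi\Phi$ and $I_m-\Phi\Psi$ from Theorem \ref{conj1} to keep $I-G_kH_k$ and $I-H_kG_k$ nonsingular $M$-matrices --- while citing \cite{guo13} for $r(\alpha,\beta)<1$, which you likewise derive from the dichotomy that exactly one of $R,S$ is singular. The paper compresses all of this into citations rather than writing out the induction and the doubling error identities, so your fleshed-out version is a faithful reconstruction rather than a different argument.
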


Since we have now  proved that $I-\Phi \Psi$ and $I-\Psi\Phi$ are nonsingular $M$-matrices
we can use the approach in \cite{gim07} to prove that the ADDA is well defined with 
$I-G_kH_k$ and $I-H_kG_k$ being nonsingular $M$-matrices for each $k\ge 0$ even when 
$\alpha= \max a_{ii}$ and $\beta= \max d_{ii}$. 
That $r(\alpha, \beta)<1$ in Theorem \ref{thm99} is already known in  \cite{guo13}.  
Thus, $H_k$ converges to $\Phi$ quadratically, and $G_k$ converges to $\Psi$ quadratically. 

By \cite[Theorem 2.3]{wwl12}, the parameters $\alpha= \max a_{ii}$ and $\beta= \max d_{ii}$ minimize 
 $r(\alpha, \beta)$ among all parameters $\alpha\ge \max a_{ii}$ and $\beta\ge  \max d_{ii}$.  
Therefore, we should normally use the optimal values $\alpha= \max a_{ii}$ and $\beta= \max d_{ii}$ for the ADDA. 
Before Theorem \ref{conj1} is proved, we avoid using  $\alpha= \max a_{ii}$ and $\beta= \max d_{ii}$, to ensure that 
the ADDA is well defined. 

The matrices $(I-G_kH_k)^{-1}$ and $(I-H_kG_k)^{-1}$ appear in the ADDA. With the proof of Theorem \ref{conj1}, 
we now know that, in Theorem \ref{thm99},  the matrices $I-G_kH_k$ and $I-H_kG_k$ will not converge to singular matrices. This is of course favorable for the ADDA.

\end{document}